 \newcounter{ctr}
 \theoremstyle{plain}
 \newtheorem{theorem}{Theorem}[section]
 \newtheorem*{lemma*}{Lemma}
 \newtheorem{lemma}[theorem]{Lemma}
 \newtheorem{corollary}[theorem]{Corollary}
 \newtheorem{proposition}[theorem]{Proposition}
 \theoremstyle{definition}
 \newtheorem{remark}[theorem]{Remark}
 \newtheorem{example}[theorem]{Example}
 \newtheorem{algorithm}[theorem]{Algorithm}
 \newcommand{\CC}{\ensuremath{\mathbb{C}}}
 \newcommand{\R}{\ensuremath{\mathscr{R}}}
 \newcommand{\ZZ}{\ensuremath{\mathbb{Z}}}
\newcommand{\be}{\begin{equation}}
\newcommand{\ee}{\end{equation}}
\renewcommand{\S}{\ensuremath{\mathcal{S}}}
\renewcommand{\t}[1]{\ensuremath{\tilde{#1}}}
\newcommand{\rsd}[1]{\ensuremath{\bar{#1}}}
\newcommand{\reading}{\text{\rm rowword}}
\newcommand{\sh}{\text{\rm sh}}
\newcommand{\cc}{\ensuremath{\xrightarrow{\text{cc}}}}
\newcommand{\cat}{\text{Cat}}
\newcommand{\ccat}{\text{CCat}}
\newcommand{\ccharge}{\text{cocharge}}
\newcommand{\cl}[1]{\ensuremath{{#1}^\text{cc}}}
\DeclareMathOperator{\ctype}{ctype}
\newcommand{\cinv}{\ensuremath{\mathscr{R}_{1^n}}}
\newcommand{\gd}{\ensuremath{\trianglerighteq}}
\newcommand{\ld}{\ensuremath{\trianglelefteq}}
\newcommand{\tto}{\ensuremath{\rightsquigarrow}}
\newcommand{\tleftright}{\ensuremath{\leftrightsquigarrow}}
\begin{document}
\author{Jonah Blasiak}
\address{Department of Mathematics, UC Berkeley}
\title{An insertion algorithm for catabolizability}

\begin{abstract}
Motivated by our recent work relating canonical bases to combinatorics of Garsia-Procesi modules \cite{B}, we give an insertion algorithm that computes the catabolizability of the insertion tableau of a standard word. This allows us to characterize catabolizability as the statistic on words invariant under  Knuth transformations, certain (co)rotations, and a new operation called a catabolism transformation. We also prove a Greene's Theorem-like characterization of catabolizability, and a result about how cocyclage changes catabolizability, strengthening a similar result in \cite{SW}.
\end{abstract}

\maketitle

\section{Introduction}
The ring of coinvariants $R_{1^n} = \CC[y_1, \dots, y_n] / (e_1, \dots, e_n)$,
thought of as a $\CC \S_n$-module with $\S_n$ acting by permuting the variables, is a graded version of the regular representation. It has the Garsia-Procesi modules $R_\lambda$ as quotients (see \cite{GP}).  Combining the work of Hotta-Springer and Lascoux (see \cite{H3},\cite{La},\cite{SW}) gives the Frobenius series
\be \label{e frobenius} \mathcal{F}_{R_\lambda}(t) = \sum\limits_{\stackrel{T \in SYT}{\ctype(T) \gd \lambda}} t^{\ccharge(T)} s_{\sh(T)},\ee
where SYT is the set of standard Young tableaux, and $\ctype(T)$ is the catabolizability of $T$, defined in  \textsection\ref{ss cat}.

In \cite{B} we exhibit a $q$-analogue $\cinv$ of the ring of coinvariants that is endowed with a canonical basis and
possesses $q$-analogues  $\R_\lambda$ of the $R_\lambda$ as cellular quotients. The elements of the canonical basis are in bijection with standard words (permutations of $1, \ldots, n$), the cells in bijection with SYT, and there is a natural grading on the cells that corresponds to cocharge under this bijection.  Tableau of shape $\lambda$ do not appear in degree less than $\sum_i \lambda_i(i-1)$ and there is a  unique occurrence of a tableau of shape $\lambda$ in this degree; refer to this tableau and its corresponding cell as the \emph{Garnir tableau of shape $\lambda$}.  In our investigations of the $\R_\lambda$, we found a way to go from any standard word $w$ to a word inserting to the Garnir tableau of shape $\ctype(P(w))$ by a sequence of relations in the Kazhdan-Lusztig preorder.

Following this sequence of relations gives an algorithm for computing $\ctype(P(w))$ for any standard word $w$.  This allows us to characterize catabolizability as the statistic on words invariant under Knuth transformations, non-zero (co)rotations (defined in  \textsection\ref{ss cocharge}), and a new \emph{catabolism transformation} defined in  \textsection\ref{ss CT} (and satisfying a normalization condition). We then use this to prove a Greene's Theorem-like characterization of catabolizability.

After reviewing the definitions of cocyclage and catabolism in \textsection\ref{s cocyclage and catabolism}, we present this algorithm and its corollaries in \textsection\ref{s cat algorithm}.

\section{Cocyclage and catabolism}
\label{s cocyclage and catabolism}
 We recall the notions of cocyclage and catabolizability as defined in \cite{LS}, \cite{SW}, but restrict to the special case of standard tableaux and words.
\subsection{}
We will use the following notational conventions in this paper. Tableaux are drawn using English notation, so that entries strictly increase from north to south along columns and weakly increase from west to east along rows. The notation $|z|$ denotes the length of the word $z$. For a tableau $T$, $|T|$ is the number of squares in $T$ and $\sh(T)$ is its shape.
The symbols $u, v,$ and $w$ will always denote standard words, and $y,z$ will denote words that are not necessarily standard. The notation $[a,b]$ for $a,b \in \ZZ$ denotes the set $\{i:a \leq i \leq b\}$ and $[n] := [1,n]$. 
\subsection{}
\label{ss cocharge}

The \emph{cocharge labeling} of a word $w$, denoted $\cl{w}$, is a (non-standard) word of the same length as $w$, and its numbers are thought of as labels of the numbers of $w$. It is obtained from $w$ by reading the numbers of $w$ in increasing order, labeling the 1 of $w$ with a 0, and if the $i$ of $w$ is labeled by $k$, then labeling the $i+1$ of $w$ with a $k$ (resp. $k+1$) if the $i+1$ in $w$ appears to the right (resp. left) of the $i$ in $w$. An example of a standard word and its cocharge labeling is
\be \begin{array}{ccc} w &= &1\ 6\ 8\ 4\ 2\ 9\ 5\ 7\ 3 \\ \cl{w} &= &\ 0\ 2\ 3\ 1\ 0\ 3\ 1\ 2\ 0.\end{array}\ee
The sum of the numbers in the cocharge labeling of $w$ is the \emph{cocharge} of $w$ or $\ccharge(w)$. We also set $\ccharge(\cl{w}) = \ccharge(w)$.

For a word $w$ and number $a \neq 1$, $aw$ (resp. $wa$) is a \emph{corotation} (resp. \emph{rotation}) of $wa$ (resp. of $aw$). It is a \emph{non-zero corotation} (resp. \emph{rotation}) of $wa$ (resp. of $aw$) if, in the cocharge labeling of $wa$, $a$ is labeled with a number greater than 0. Similarly, define \emph{zero (co)rotations} for the case $a$ is labeled with a 0.

(Co)rotations respect cocharge labeling in the following way:  $v$ is a corotation of $w$ if and only if $\cl{v} = a+1\ y$, $\cl{w} = ya$ for $y$ a word and $a$ a number.

Let $s_i$ be the simple reflection of $\S_n$ that transposes $i$ and $i+1$. Thinking of a standard word $w = w_1\cdots w_n$ as being the map $w: [n] \to [n]$, $i \mapsto w_i$, we can act on $w$ on the right by $\S_n$; then $w s_i$ is the word obtained from $w$ by swapping the numbers in positions $i$ and $i+1$. We can also act on cocharge labelings with this same right action, however this does not always result in the cocharge labeling of a standard word. It is not hard to see that a standard word can be recovered from its cocharge labeling. A word $z$ with $c_0$ 0's, $c_1$ 1's, $\dots$, is the cocharge labeling of some standard word if and only if there is an $l$ such that $c_i > 0$ for $i \in [0, l]$, $c_i = 0$ for $i > l$, and some $i$ appears to the left of some $i-1$ for all $i \in [l]$. The following are easily seen to be equivalent.
\be\label{e cocharge preserving}
\begin{array}{rl}
(i) & \cl{w} s_i = \cl{(w s_i)}.\\
(ii) &\cl{w} \text{ and }\cl{(w s_i)} \text{ have the same content.} \\
(iii) & \ccharge(w) = \ccharge(w s_i).\\
(iv) & |w_i - w_{i+1}| \neq 1.
\end{array}
\ee
If any (all) of these holds, then the transformation $w \tto w s_i$ is \emph{cocharge-preserving}.

There is a \emph{cocyclage} from the tableau $T$ to the tableau $T'$, written $T \cc T'$, if there exist words $u, v$ such that $v$ is the corotation of $u$ and $P(u) = T$ and $P(v) = T'$. The \emph{cocyclage poset} is the poset on the set of SYT generated by the relation \cc.

Define the cocharge labeling $\cl{T}$ of a tableau $T$ to be $P(\cl{\reading(T)})$, and $\ccharge(T)$ to be the sum of the entries in $\cl{T}$. Here $\reading(T)$ denotes the row reading word of $T$. The tableau $\cl{T}$ is also $P(\cl{w})$ for any $w$ inserting to $T$. This follows from the fact that Knuth transformations do not change left descent sets. A cocyclage $T \cc T'$ is a \emph{non-zero (resp. zero) cocyclage} if any (equivalently, every) corotation inducing it is non-zero (resp. zero).

The following theorem is easy for the standard tableaux case. The \emph{cyclage poset} is the poset dual to the cocyclage poset, i.e., the poset obtained by reversing all relations.

\begin{theorem}[\cite{LS}]
The cyclage poset is graded, with rank function given by cocharge.
\end{theorem}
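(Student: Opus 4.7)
The plan is to reduce the theorem to the single observation that every cocyclage $T \cc T'$ satisfies $\ccharge(T') = \ccharge(T) + 1$. Once this is established, any saturated chain in the cocyclage poset strictly increments cocharge by $1$ at each step, so all saturated chains between a common pair of endpoints have the same length, and cocharge (up to additive normalization) serves as the rank function; passing to the dual gives the statement for the cyclage poset.

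For the key observation, I would first confirm that cocharge is well-defined on tableaux. As noted in \textsection\ref{ss cocharge}, $\cl{T} = P(\cl{w})$ for any standard word $w$ inserting to $T$, because Knuth transformations preserve left descent sets and hence cocharge labelings. Thus $\ccharge$ descends from words to tableaux. I would then invoke the characterization stated just before the theorem: $v$ is a corotation of $u$ if and only if there exist a word $y$ and a letter $a$ with $\cl{u} = ya$ and $\cl{v} = (a{+}1)\,y$. Summing entries yields
\[
\ccharge(v) \;=\; (a+1) + \big(\text{sum of entries of }y\big) \;=\; \ccharge(u) + 1.
\]
A cocyclage $T \cc T'$ is by definition induced by some pair $(u,v)$ with $P(u) = T$, $P(v) = T'$, and $v$ a corotation of $u$, so $\ccharge(T') = \ccharge(v) = \ccharge(u) + 1 = \ccharge(T) + 1$, as required.

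I do not anticipate a serious obstacle, since all of the combinatorial input has already been assembled in \textsection\ref{ss cocharge}. The one subtlety worth flagging is that the cocharge-labeling characterization of corotation applies uniformly to both zero and non-zero corotations --- the letter migrating from the tail to the head is relabeled $a \mapsto a+1$ in either case --- so every cocyclage, regardless of type, contributes exactly one unit to the rank, as needed for gradedness.
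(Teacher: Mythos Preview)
Your argument is correct and is precisely the ``easy'' verification the paper alludes to: the paper does not actually supply a proof here, but simply remarks that the result is easy in the standard case and cites \cite{LS}, having already recorded the key fact that a corotation replaces $\cl{u}=ya$ by $\cl{v}=(a{+}1)\,y$. Your deduction that each cocyclage step increases cocharge by exactly one, hence that $\cc$ coincides with the covering relation and cocharge is a rank function, is the intended reasoning.
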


\subsection{}
\label{ss cat}

Let $Z_\lambda$ be the superstandard tableau of shape $\lambda$ with $0$'s in the first row, $1$'s in the second row, etc., and let $Z^*_\lambda$ be the standard tableau such that $\cl{Z^*_\lambda} = Z_\lambda$. For a skew tableau $T$ and index $r$ (resp. index $c$), let $H_r(T) = P(T_nT_s)$ (resp. $V_c(T)=P(T_eT_w))$, where $T_n$ and $T_s$ (resp. $T_e$ and $T_w$) are the north and south (resp. east and west) subtableaux obtained by slicing $T$ horizontally (resp. vertically) between its $r$-th and $(r+1)$-th rows (resp. $c$-th and $(c+1)$-th columns).
Recall that we are using English notation for tableaux.

For $\lambda \subseteq \sh(T)$, let $T_\lambda$ be the subtableau of $T$ of shape $\lambda$. If $(m) \subseteq \sh(T)$, then define the $m$-catabolism (resp. $m$-column catabolism) of $T$, notated $\cat_m(T)$ (resp. $\ccat_m(T)$), to be the tableau $H_1(T-T_{(m)})$ (resp. $V_m(T-T_{(m)}$). For a partition $\lambda \vdash n := |T|$, $\lambda$-catabolizability (resp. $\lambda$-column catabolizability) is defined inductively as follows: $T$ is $\lambda$-(column) catabolizable if $T_{(\lambda_1)}$ contains the $\lambda_1$ smallest entries of $T$ and the $\lambda_1$-(column) catabolism of $T$ is $\widehat{\lambda}$-(column) catabolizable, where $\lambda = (\lambda_1, \widehat{\lambda})$; the empty tableau is $\varnothing$-(column) catabolizable.

The following is a consequence of Proposition 48 in \cite{SW} and the surrounding discussion.

\begin{proposition}
\label{p basic cat}
For any SYT $T$, there is a unique maximal in dominance order partition $\lambda$ such that $T$ is $\lambda$-catabolizable.
\end{proposition}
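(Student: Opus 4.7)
The plan is to construct an explicit candidate partition $\lambda^{\star} = \lambda^{\star}(T)$ by a greedy recursion, verify directly that $T$ is $\lambda^{\star}$-catabolizable, and then show that $\lambda^{\star} \gd \mu$ for every $\mu$ such that $T$ is $\mu$-catabolizable; this implies that $\lambda^{\star}$ is the unique dominance-maximum. The recursion is as follows. Let $m = m(T)$ be the largest integer such that $T_{(m)}$ contains the smallest $m$ entries of $T$ (well defined and at least $1$, since $T_{(1)} = \{1\}$ for every SYT), and set $\lambda^{\star}(\varnothing) = \varnothing$ and $\lambda^{\star}(T) := (m,\, \lambda^{\star}(\cat_m(T)))$.

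Two preliminary checks are needed. First, $\lambda^{\star}(T)$ is weakly decreasing, i.e.\ $m(\cat_m(T)) \leq m$: by maximality of $m$, the entry $m+1$ lies in row $\geq 2$ of $T$, and after forming the rectification $\cat_m(T) = P(\reading(T_n)\cdot\reading(T_s))$ (where $T_n$ is the tail of row $1$ and $T_s$ is the subtableau of rows $\geq 2$), a short bumping/Greene argument bounds the length of the initial run of consecutive small entries in row $1$ of $\cat_m(T)$ by $m$. Second, $T$ is $\lambda^{\star}(T)$-catabolizable, which is immediate from the recursion. (Alternatively, existence of some catabolizable $\lambda$ can be seen directly from the fact that every SYT is $(1^{|T|})$-catabolizable.)

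For dominance-maximality I would induct on $|T|$. Suppose $T$ is $\mu$-catabolizable; necessarily $\mu_1 \leq m$. If $\mu_1 = m$, the inductive hypothesis applied to $\cat_m(T)$ (which is $\widehat{\mu}$-catabolizable by assumption) gives $\lambda^{\star}(\cat_m(T)) \gd \widehat{\mu}$, whence $\lambda^{\star}(T) = (m, \lambda^{\star}(\cat_m(T))) \gd \mu$. If $\mu_1 < m$, I would invoke a compatibility lemma to replace $\mu$ by a dominating partition $\mu' \gd \mu$ with $\mu'_1 = \mu_1 + 1$ such that $T$ is still $\mu'$-catabolizable, and iterate $m - \mu_1$ times to reduce to the case $\mu_1 = m$.

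The principal obstacle is this compatibility lemma in the $\mu_1 < m$ case. Because $\cat_{a+b}(T)$ and $\cat_b(\cat_a(T))$ are generically distinct tableaux, one cannot merely equate the single catabolism step $\cat_{\mu_1 + 1}$ with the composition $\cat_1 \circ \cat_{\mu_1}$; instead, one must compare the two constructions at the level of slide-equivalence of the underlying skew tableaux and track how catabolizability is transferred under the resulting Knuth-equivalence. This is essentially the content of Proposition~48 of~\cite{SW} and its surrounding discussion, which the proposition at hand is cited as a consequence of, and it is where the real work of a self-contained proof would reside.
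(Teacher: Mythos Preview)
The paper does not actually prove this proposition: it is stated there as ``a consequence of Proposition~48 in~\cite{SW} and the surrounding discussion,'' with no further argument. Your proposal is therefore not being compared against a proof but against a bare citation, and in fact you arrive at the same place: you unpack the natural greedy recursion $\lambda^\star(T)=(m,\lambda^\star(\cat_m T))$, correctly isolate the nontrivial step (the ``compatibility lemma'' that lets one raise $\mu_1$ toward $m$ while preserving $\mu$-catabolizability), and then explicitly identify that step with Proposition~48 of~\cite{SW}. So your outline and the paper's treatment are aligned, with your version being a more detailed reduction to the same external input.

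Two small remarks on the parts you do try to carry out yourself. First, the claim $m(\cat_m T)\le m$ is true, but ``a short bumping/Greene argument'' undersells what is needed; a clean direct argument is: if the $m+1$ smallest surviving entries $m{+}1,\dots,2m{+}1$ all lay in row~1 of $\cat_m T$, then restricting the inserted word $\reading(T_n)\,\reading(T_s)$ to these values would be increasing, which forces all of $m{+}1,\dots,2m{+}1$ into positions $(2,1),\dots,(2,m{+}1)$ of $T$; but then the entry at $(1,m{+}1)$ would have to lie in $\{m{+}1,\dots,2m\}$, already occupied, a contradiction (and if row~1 has length exactly $m$, row~2 would be strictly longer than row~1). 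Second, you are right that the $\mu_1<m$ case is where the real content is: one cannot simply identify $\cat_{\mu_1+1}$ with $\cat_1\circ\cat_{\mu_1}$, and bridging the two catabolism chains is exactly what the Shimozono--Weyman machinery provides. Your proposal is an accurate roadmap, not a self-contained proof, and you say so.
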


The $\lambda$ of this proposition is the \emph{catabolizability} of $T$, denoted $\ctype(T)$.  Catabolizability is  computed by performing the sequence of catabolisms to $T$ in which $\cat_m$ is applied with the largest $m$ such that $T_{(m)} = Z^*_{(m)}$. Similarly, define the column catabolizability of $T$ to be the partition obtained by performing the sequence of column catabolisms to $T$ in which $\ccat_m$ is applied with the largest $m$ such that $T_{(m)} = Z^*_{(m)}$.  With the definitions of $\lambda$-(column) catabolizability above, the following proposition is quite tricky.

\begin{proposition}[{\cite[Proposition 49]{SW}}]
\label{p row cat equals column cat}
A standard tableau is $\lambda$-catabolizable if and only if it is $\lambda$-column catabolizable.
\end{proposition}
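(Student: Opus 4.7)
The plan is induction on $|T|$, with the empty tableau as the trivial base case. Writing $\lambda=(\lambda_1,\widehat\lambda)$ and $m=\lambda_1$, both $\lambda$-catabolizability and $\lambda$-column catabolizability impose the same initial condition $T_{(m)}=Z^*_{(m)}$, so the inductive step reduces to showing that $\cat_m(T)$ is $\widehat\lambda$-catabolizable if and only if $\ccat_m(T)$ is $\widehat\lambda$-column catabolizable. Applying the inductive hypothesis to the strictly smaller tableau $\cat_m(T)$ equates row- and column-catabolizability there, reducing the task to showing that $\cat_m(T)$ and $\ccat_m(T)$ share the same column catabolizability.

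This is the substantive content. The two tableaux need not have conjugate shapes --- for instance, one can have $\cat_m(T)$ of shape $(3,2,1)$ while $\ccat_m(T)$ has shape $(2,2,1,1)$ --- so one cannot hope to identify them directly. I would identify both as insertion tableaux built from the skew tableau $T-T_{(m)}$: writing $N,S$ for the row readings of its first row and its remaining rows respectively, one has $\cat_m(T)=P(NS)$; writing $E,W$ for the row readings of its parts east and west of column $m$, one has $\ccat_m(T)=P(EW)$. Both $NS$ and $EW$ are built from the same multiset of letters, and on examples they are related by a short sequence of non-zero corotations together with Knuth moves; I would try to make this comparison systematic, so as to link $\cat_m(T)$ and $\ccat_m(T)$ by a chain of non-zero cocyclages in the cocyclage poset.

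The main obstacle is then proving that non-zero cocyclage preserves column catabolizability. This is far from formal, because the shape can change non-trivially along a single cocyclage step; a proof would require a delicate inductive argument tracking how the outermost column catabolism $\ccat_{m'}$ interacts with corotation of the reading word. An alternative I would consider if this route proves too intricate is to bypass cocyclage entirely and instead describe the full row- and column-catabolism sequences of $T$ as iterated jeu de taquin rectifications of specific horizontal and vertical strips of $T$, then compare them by a direct slide-by-slide argument that ignores the row/column distinction; I believe this is close in spirit to the argument in \cite{SW}.
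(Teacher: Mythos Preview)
Your proposal is a plan with two substantial gaps that you yourself flag: (a) that $\cat_m(T)$ and $\ccat_m(T)$ are related by a chain of non-zero corotations and Knuth moves, and (b) that non-zero cocyclage preserves $\mu$-column catabolizability. Neither is proved, and (b) is essentially as hard as the proposition itself. There is also a subtler issue in your reduction: you phrase the goal as showing that $\cat_m(T)$ and $\ccat_m(T)$ ``share the same column catabolizability,'' but what you actually need is that they are $\widehat\lambda$-column catabolizable for the specific $\widehat\lambda$ at hand. These coincide only if $\mu$-column catabolizability is downward-closed in dominance order---the column analogue of Proposition~\ref{p basic cat}. The Remark following Theorem~\ref{cat_algorithm_theorem2} warns that this column analogue is \emph{harder} than Proposition~\ref{p row cat equals column cat} itself, so you cannot invoke it here. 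If instead your (b) is meant as ``non-zero cocyclage preserves $\mu$-column catabolizability for each fixed $\mu$,'' the reduction is fine, but then (b) is exactly the crux and remains unproved.

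The paper's route is entirely different and sidesteps these difficulties. Proposition~\ref{p row cat equals column cat} is obtained as a corollary of Theorem~\ref{cat_algorithm_theorem2}: Algorithm~\ref{cat_algorithm3} (the $\lambda$-bounded variant of the catabolism insertion algorithm) outputs \emph{true} on input $(u,\lambda)$ if and only if $P(u)$ is $\lambda$-catabolizable. The proof, parallel to Theorem~\ref{cat_algorithm_theorem}(vi), runs the algorithm on $\reading(P(u))$ and matches its steps to one row catabolism; the same argument with $\ccat_m$ in place of $\cat_m$ shows the output is \emph{true} iff $P(u)$ is $\lambda$-column catabolizable. Both notions are thus characterized by the same algorithmic condition. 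In effect, the invariance you need in (b) is precisely what the algorithm packages---its output is unchanged under Knuth moves, catabolism transformations, and non-zero corotations (Theorem~\ref{cat_algorithm_theorem}(iii)--(v))---so the paper establishes your missing lemma as a byproduct of the algorithmic framework rather than by a direct cocyclage argument.
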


We reprove this result in the next section using the catabolism insertion algorithm.

\section{Catabolism insertion}
\label{s cat algorithm}
\subsection{}
\label{ss CT}
We now describe the \emph{catabolism insertion algorithm}, which takes a standard word $w$ as input and outputs a partition $F(w)$ that we will show to be equal to the catabolizability of the insertion tableau of $w$.

Let $\epsilon_i \in \ZZ^n$ be the standard basis vector with a 1 in its $i$-th coordinate and $0$'s elsewhere.

\begin{algorithm}\label{cat_algorithm}
Let $f$ be the function below, which takes a pair consisting of a (non-standard) word and a partition to another such pair. Let $x = ya$, $y$ a word and $a$ a number.

\be \label{e f}
f(x, \nu) =
\begin{cases}
(y, \nu + \epsilon_{a+1}) & \text{if } \nu + \epsilon_{a+1} \text{ is a partition,} \\
(a+1 \ y, \nu) & \text{otherwise.}
\end{cases} \ee

Given the input standard word $w$, first determine the cocharge labeling $z$ of $w$.

Next, apply $f$ to $(z, \emptyset)$ repeatedly, obtaining the sequence of pairs $f^{(i)}(z,\emptyset)$, stopping when the word of the pair is empty. Output the partition of this final pair, and denote this output $F(w)$.

The transition from $(x, \nu)$ to $f(x, \nu)$ is a \emph{step} of the algorithm.   We say that $a$ is \emph{presented} to $\nu$, and in the top case of (\ref{e f}), $a$ is \emph{inserted} into $\nu$, while in the bottom case, $a$ is \emph{corotated}.  The step of the algorithm in the top case is an \emph{insertion}, and in the bottom a  \emph{corotation}.
\end{algorithm}

It is convenient to think of the $\nu$ in the algorithm as the tableau $Z_\nu$, as illustrated by the following example.

\begin{example}
The word $w=1\ 6\ 8\ 4\ 2\ 9\ 5\ 7\ 3$ has cocharge labeling $z = 0\ 2\ 3\ 1\ 0\ 3\ 1\ 2\ 0$. The sequence of word-partition pairs produced by the algorithm is

\[  \Yboxdim9pt \small\begin{array}{rrlcrrl}
i\quad & \multicolumn{2}{c}{f^{(i)}(z,\emptyset)} & \qquad\qquad& i\quad & \multicolumn{2}{c}{f^{(i)}(z,\emptyset)} \\ \\
0\quad & 023103120 & {\tiny\emptyset} &\qquad\qquad&7\quad& 44302 & \tiny\young(00,11) \\ \\
1\quad  & 02310312 & \tiny\young(0) &\qquad\qquad&8\quad&4430 & \tiny\young(00,11,2) \\ \\
2\quad & 30231031 & \tiny\young(0) &\qquad\qquad&9\quad&443 & \tiny\young(000,11,2) \\ \\
3\quad & 3023103 & \tiny\young(0,1) &\qquad\qquad&10\quad& 44 & \tiny\young(000,11,2,3) \\ \\
4\quad & 4302310 & \tiny\young(0,1) &\qquad\qquad&11\quad&4 & \tiny\young(000,11,2,3,4) \\ \\
5\quad & 430231 & \tiny\young(00,1) &\qquad\qquad&12\quad&5 & \tiny\young(000,11,2,3,4) \\ \\
6\quad & 43023 & \tiny\young(00,11) &\qquad\qquad&13\quad& \emptyset & \tiny\young(000,11,2,3,4,5)
\end{array} \]
\end{example}

Letting $z$ be the cocharge labeling of $w$, a \emph{catabolism transformation} of $w$ (or of $z$) is an operation taking $w$ to $w s_i$ ($z$ to $z s_i$) if $|z_i - z_{i+1}| > 1$:
\be\label{e up arrow} \cdots z_i z_{i+1} \cdots \tleftright\ \cdots z_{i+1} z_i \cdots.\ \ee
It is easy to see from (\ref{e cocharge preserving}) (iv) that a catabolism transformation is cocharge-preserving.

We also define an \emph{ascent} of  $w$ (or of $z$) to be a transformation of the form $z \tto z s_i$ provided it is cocharge-preserving and $z_i > z_{i+1}$.

Some properties of Algorithm \ref{cat_algorithm} are more easily seen from the following variant, which is clearly equivalent to it.
\begin{algorithm}\label{cat_algorithm2}
A step of Algorithm \ref{cat_algorithm} from $(ya,\nu)$ to $f(ya,\nu)$ is rephrased as follows. Instead of keeping track of a partition $\nu$, keep track of the corresponding superstandard tableau $Z_\nu$.  Replace presenting $a$ to $\nu$ with column-inserting $a$ into $Z_\nu$.
In the insertion case, this produces the same result as in Algorithm \ref{cat_algorithm}.

The corotation case is broken into three parts, the first of which is this column-insertion; let $T$ be the tableau resulting from this insertion. The tableau $T$ contains $Z_\nu$ and $T - Z_\nu$ is a single square containing an $a$.   The corresponding number of $\reading(T)$ is at least two more than all the numbers to the right of it.  The second part performs the sequence of catabolism transformations taking $\reading(T)$ to $\reading(Z_\nu) \ a$.   The third part then corotates
\[ y\ \reading(Z_\nu)\ a \]
to obtain
\[ a+1\ y\ \reading(Z_\nu), \]
and this word is output as the pair $(a+1 \ y, Z_\nu)$.
\end{algorithm}

\begin{theorem}\label{cat_algorithm_theorem}
Algorithm \ref{cat_algorithm}, with input a standard word $u$ of length $n$, satisfies:
\begin{list}{\emph{(\roman{ctr})}} {\usecounter{ctr} \setlength{\itemsep}{1pt} \setlength{\topsep}{2pt}}
\item after every step, the word-partition pair $(x,\nu)$ is such that \linebreak $x\ \reading(Z_\nu)$ is the cocharge labeling of some standard word,
\item terminates (in at most $n+\binom{n}{2}$ steps),
\item $F(u) = F(v)$ if $v$ is a non-zero corotation of $u$,
\item $F(u) = F(v)$ if $u \tto v$ is a catabolism transformation,
\item $F(u) = F(v)$ if $u \tto v$ is a Knuth transformation,
\item $F(u) =  \ctype(P(u))$.
\end{list}
\end{theorem}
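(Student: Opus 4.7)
The plan is to establish parts (i) through (v) by close analysis of individual steps of Algorithm~\ref{cat_algorithm} and then deduce (vi) by combining them with a direct computation on Garnir tableaux. For (i), I would induct on steps: the initial state $(\cl{u},\emptyset)$ satisfies the invariant, and both insertion and corotation preserve the combinatorial characterization of cocharge labelings recalled in \textsection\ref{ss cocharge}. For (ii), each insertion decreases $|x|$ (hence at most $n$ insertions), while each corotation increases the label-sum of $x\cdot\reading(Z_\nu)$ by one; by (i) this sum is bounded by $\binom{n}{2}$, the maximum cocharge of a length-$n$ standard word. For (iii), with $\cl{u}=ya$ and $a\ge 1$, the first step on $u$ is forced to be a corotation (since $\emptyset+\epsilon_{a+1}$ is not a partition), yielding the initial state $((a{+}1)y,\emptyset)$ of the algorithm on $v$.

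For (iv), I would compare the algorithm on $z$ and on $zs_i$ when $|z_i-z_{i+1}|>1$: the steps processing the suffix $z_{i+2}\cdots z_n$ are identical in both cases, and when the two swapped labels are eventually presented, a case analysis over the four insertion/corotation combinations shows they commute because labels differing by at least two affect disjoint rows of $\nu$. For (v), the cleanest route is via Algorithm~\ref{cat_algorithm2}: an elementary Knuth move on $u$ alters the sequence of letters being column-inserted into successive $Z_\nu$'s in a way that, by short case analysis on each of the two Knuth relations, produces the same chain of $Z_\nu$'s.

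For (vi), the plan has three parts. First, by (v), $F$ depends only on $T=P(u)$, so write $F(T)$. Second, I compute $F(Z^*_\lambda)=\lambda=\ctype(Z^*_\lambda)$ directly: by (v) it suffices to run the algorithm on a word with cocharge labeling $\reading(Z_\lambda)$, whereupon every step is an insertion filling row $i$ of $\nu$ with $(i{-}1)$'s in turn, while a straightforward unwinding of the definition of $\ctype$ gives the equality for $\ctype$. Third, by (i) and (ii) the final state corresponds to a word inserting to $Z^*_{F(u)}$, reached from $u$ by a sequence of steps in which each corotation step is a non-zero corotation of the underlying standard word, and each insertion step decomposes (via Algorithm~\ref{cat_algorithm2}) into a column insertion into $Z_\nu$ followed by catabolism transformations of the cocharge labeling. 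Granting that these elementary moves preserve $\ctype(P(\cdot))$, we conclude $\ctype(P(u))=\ctype(Z^*_{F(u)})=F(u)$.

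The main obstacle I anticipate is precisely this last preservation statement. Knuth invariance of $\ctype$ is automatic, but for non-zero corotations one needs a refinement of \cite[Proposition~48]{SW} asserting that non-zero cocyclages preserve $\ctype$ (while zero cocyclages can strictly decrease it in dominance order), which is advertised as one of the paper's contributions. For catabolism transformations of $\cl{w}$, invariance of $\ctype(P(w))$ is not a priori available in the literature; the natural strategy is to invoke Proposition~\ref{p row cat equals column cat} and verify that swapping two far-apart cocharge labels is transparent to the column-catabolism algorithm applied to $P(w)$, so that it preserves the column catabolizability and hence the catabolizability.
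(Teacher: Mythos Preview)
Your treatment of (i)--(iv) matches the paper's closely, and your (v) is in the same spirit though less specific (the paper reduces via (iv) to the case $b=c$, $a=b-1$ and then checks $f^{(3)}(x,\nu)=f^{(3)}(x',\nu)$ unless all three letters are corotated, in which case it inducts). The real divergence is in (vi), and there your plan has a genuine circularity.

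Your strategy for (vi) is to track the underlying standard word through the algorithm and argue that each step preserves $\ctype(P(\cdot))$, so that $\ctype(P(u))=\ctype(Z^*_{F(u)})=F(u)$. But the two preservation statements you need---that non-zero corotations and that catabolism transformations preserve $\ctype$---are in this paper \emph{consequences} of Theorem~\ref{cat_algorithm_theorem} (namely Corollary~\ref{c cke} and Corollary~\ref{c corotation}), not inputs to it. Your proposed workarounds do not escape this: the ``refinement of \cite[Proposition~48]{SW}'' for non-zero cocyclages is exactly Corollary~\ref{c corotation}, proved downstream via the Greene-type Theorem~\ref{t green}, which itself rests on Corollary~\ref{c cke}. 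And for catabolism transformations, saying that swapping far-apart cocharge labels is ``transparent to the column-catabolism algorithm applied to $P(w)$'' is not a workable plan: $w$ and $ws_i$ generally have different insertion tableaux, and there is no evident direct mechanism relating the column-catabolism sequences of $P(w)$ and $P(ws_i)$ without already knowing something like Corollary~\ref{c cke}.

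The paper sidesteps all of this by proving (vi) via a clean induction on $n$ that uses only (v) and the \emph{definition} of $\ctype$. Using (v), run the algorithm on $\reading(P(u))$: the first steps corotate the nonzero labels in the first row of $\cl{P(u)}$, the next $m$ steps insert the $0$'s (where $m$ is the number of $0$'s, i.e.\ the largest $m$ with $P(u)_{(m)}=Z^*_{(m)}$), and what remains is, after subtracting $1$ from every label, the cocharge labeling of a standard word $v$ with $P(v)$ equal to $\cat_m(P(u))$ shifted down by $m$. Induction gives $F(v)=\ctype(P(v))$, whence $F(u)=(m,F(v))=(m,\ctype(P(v)))=\ctype(P(u))$. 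This is the key idea your plan is missing: rather than proving that the algorithm's moves preserve $\ctype$, one shows directly that the algorithm \emph{simulates} the recursive definition of $\ctype$.
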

\begin{proof}
Statement (i) is easy to see from Algorithm \ref{cat_algorithm2}.  It realizes each step as a sequence of Knuth transformations, catabolism transformations, or non-zero (co)rotations on the word $\ x\ \reading(Z_\nu)$, all of which could be applied by converting back to a standard word, performing the operation, and then taking the cocharge labeling.

The algorithm terminates in at most $n+\binom{n}{2}$ steps because for any step from $(x,\nu)$ to $(x^*,\nu^*) := f(x,\nu)$, either \begin{multline*}
|x^*| = |x| - 1  \quad\text{ or } \\\ccharge(x^* \reading(Z_{\nu^*})) = \ccharge(x\ \reading(Z_\nu)) + 1;
 \end{multline*}
 cocharge never exceeds $\binom{n}{2}$.

Statement (iii) is clear.

For (iv), suppose that $\cl{u} \tto \cl{v}$ is the catabolism transformation
\be \cdots ab\cdots\tto \cdots ba\cdots.\ee Whether $a$ is corotated or inserted does not depend on what happens to $b$ and similarly whether $b$ is corotated or inserted does not depend on what happens to $a$. Therefore, after $a$ and $b$ are presented, the resulting pair $(x, \nu)$ is the same for the algorithm applied to $u$ and applied to $v$, unless $a$ and $b$ are both corotated. In this case, the result follows by induction since we can apply the same argument to the catabolism transformation $\cdots a+1\ b+1\cdots \tto \cdots b+1\ a+1\cdots$.

To show (v), we prove the slightly stronger statement that, after any step of the algorithm, the pair $(x,\nu)$ can be replaced by $(x',\nu)$ with $x \tto x'$ a Knuth transformation without changing the output.  We may assume that $x \tto x'$ is a Knuth transformation in the last three numbers of $x$ and is written as
\be \cdots bac \tto \cdots bca.\ee 
Given (iv), we may further assume that $b=c$, $a=b-1$ (we must also check the case $\cdots acb \tto \cdots cab$, where $b=a$ and $c=b+1$, but this is similar). If $a > 0$ and $\nu_a = \nu_{a+1}=\nu_{b}=\nu_{b+1}$, then  $a,b$, and $c $ are corotated and the result follows by induction from the Knuth transformation \be b+1\ a+1\ c+1\cdots\tto b+1\ c+1\ a+1 \cdots. \ee Otherwise, one checks that  $f^{(3)}(x,\nu)=f^{(3)}(x',\nu)$.

Given (v), after any step, we are free to replace $x$ with something Knuth equivalent to it without changing the final output. Also, after any step, we may run the algorithm on anything Knuth equivalent to $x\ \reading(Z_\nu)$ and get the same output. Thus for (vi), run the algorithm on $\reading(P(u))$. The first steps of the algorithm corotate the numbers in the first row of $\cl{P(u)}$ that are not $0$. The next $m$ steps are insertions of $0$'s, where $m$ is the number of $0$'s in $\cl{P(u)}$, also the largest integer such that $P(u)_{(m)} = Z_{(m)}^*$. Let $(x, \nu)$ be the pair at this stage, $x'$ be the result of subtracting $1$ from all numbers in $x$, and  $v$ be the standard word with $\cl{v}=x'$. The result then follows from
\be F(u) = (m, F(v)) = (m, \ctype(P(v))) = \ctype(P(u)). \ee 
The leftmost equality holds because the word $x$ contains at most $m$ $1$'s, and the middle equality is the inductive statement $F(v) = \ctype(P(v))$.  The rightmost equality holds because the standard tableau  $P(v)$ is the same as the result of subtracting $m$ from all entries in $\cat_m(P(u))$ and $\nu = (m)$.

\end{proof}

From now on, we write $\ctype(u)$ for $\ctype(P(u))$. From the theorem and remarks in the proof of (i), we obtain the following.

\begin{corollary}
\label{c cke}
Catabolizability is characterized as the statistic on standard words that is invariant under non-zero (co)rotations, catabolism transformations, and Knuth transformations, and satisfies
\[ \ctype(\reading(Z^*_\lambda)) = \lambda. \]
\end{corollary}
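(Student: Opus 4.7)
The plan is to prove existence and uniqueness separately. For existence, $\ctype$ satisfies all four conditions: invariance under the three transformation types combines parts (iii), (iv), (v) of Theorem \ref{cat_algorithm_theorem} with (vi), while the normalization $\ctype(\reading(Z^*_\lambda)) = \lambda$ is immediate from the definitions: the first row of $Z^*_\lambda$ is exactly $1, \dots, \lambda_1$, so $\cat_{\lambda_1}(Z^*_\lambda)$ is $Z^*_{\hat\lambda}$ with entries shifted by $\lambda_1$, and induction combined with Proposition \ref{p basic cat} (to check maximality in dominance) yields the claim.

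For uniqueness, let $g$ be any statistic on standard words satisfying the four conditions, and fix a standard word $u$. The essential input is the observation already underlying the proof of Theorem \ref{cat_algorithm_theorem}(i): each step of Algorithm \ref{cat_algorithm2} realizes as a sequence of Knuth transformations, catabolism transformations, and non-zero (co)rotations applied to the standard word whose cocharge labeling is $x\ \reading(Z_\nu)$. Consequently, if we follow the algorithm on input $u$ and, at each pair $(x, \nu)$ encountered, record the standard word with that cocharge labeling, we obtain a trajectory of standard words along which $g$ is constant. The algorithm terminates at $(\emptyset, F(u))$, whose associated standard word has cocharge labeling $\reading(Z_{F(u)})$, namely it is $\reading(Z^*_{F(u)})$. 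The normalization hypothesis on $g$ then yields
\begin{equation*}
g(u) \;=\; g\bigl(\reading(Z^*_{F(u)})\bigr) \;=\; F(u) \;=\; \ctype(u),
\end{equation*}
with the last equality supplied by Theorem \ref{cat_algorithm_theorem}(vi).

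The main obstacle will be verifying carefully the realization claim above --- that every step of Algorithm \ref{cat_algorithm2} can be executed at the level of associated standard words using only the three allowed transformation types, and in particular without needing zero (co)rotations or other non-invariant moves. Insertion steps correspond to column insertions and translate to Knuth moves on the standard word; corotation steps require additionally a sequence of catabolism transformations (to shuttle the freshly column-inserted letter to the far right of the reading word) followed by a single non-zero corotation. Because all three allowed operations are cocharge-preserving by (\ref{e cocharge preserving}), the dictionary between a standard word and its cocharge labeling stays consistent through each sub-step, so the trajectory is well-defined and the invariance of $g$ propagates along it to give the desired equality.
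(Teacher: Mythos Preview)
Your proposal is correct and follows essentially the same approach as the paper: the paper's proof is the single sentence ``From the theorem and remarks in the proof of (i), we obtain the following,'' and you have unpacked exactly those remarks --- that each step of Algorithm~\ref{cat_algorithm2} is realized on the associated standard word by Knuth transformations, catabolism transformations, and non-zero corotations --- to deduce that any statistic invariant under these moves must track $F$ along the algorithm's trajectory and hence agree with $\ctype$ at the terminal word $\reading(Z^*_{F(u)})$. Your explicit check that the corotation step is always \emph{non-zero} (since $a=0$ is always inserted) is a useful point the paper leaves implicit.
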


A similar proof to that of (vi) gives
\begin{corollary}
\label{c row catability eq column catability}
The catabolizability of a tableau $T$ equals the column catabolizability of $T$.
\end{corollary}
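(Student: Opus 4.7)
The plan is to mirror the proof of Theorem~\ref{cat_algorithm_theorem}(vi) and proceed by strong induction on $|T|$. Write $\ctype^c(T)$ for the column catabilizability. The base case $T = \varnothing$ is trivial, so assume the result for all smaller tableaux.

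Let $m$ be the largest integer with $T_{(m)} = Z^*_{(m)}$, which is the common first part of both $\ctype(T)$ and $\ctype^c(T)$ (the two definitions use the same condition). Applying the inductive hypothesis to the smaller tableau $\ccat_m(T)$ gives $\ctype^c(\ccat_m(T)) = \ctype(\ccat_m(T))$, so that $\ctype^c(T) = (m, \ctype(\ccat_m(T)/m))$, where $/m$ means subtracting $m$ from all entries. Meanwhile, applied to any $u$ with $P(u) = T$, the proof of (vi) gives $\ctype(T) = F(u) = (m, \ctype(\cat_m(T)/m))$. Comparing these, the corollary reduces to the key claim that $\ctype(\cat_m(T)) = \ctype(\ccat_m(T))$.

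To establish this, set $S = T - T_{(m)}$. The words $w_1 = \reading(S_n)\reading(S_s)$ and $w_2 = \reading(S_e)\reading(S_w)$ insert to $\cat_m(T)$ and $\ccat_m(T)$ respectively, by the definitions of $H_1$ and $V_m$. By Theorem~\ref{cat_algorithm_theorem}(vi) applied on both sides, it suffices to prove $F(w_1) = F(w_2)$, and since $F$ is invariant under Knuth transformations, non-zero (co)rotations, and catabolism transformations (parts (iii)--(v) of the theorem), it is enough to exhibit a chain of these operations carrying $w_1$ to $w_2$. A direct computation in small cases suggests that $w_2$ is frequently obtained from $w_1$ by a single non-zero corotation that cycles the rightmost entry of $w_1$ (the last cell of row $2$ of $S$) to the front; the fact that the two words are identical when $S$ has no cells in rows $\geq 2$ and columns $> m$ supports this picture.

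The main obstacle is constructing this chain of operations in full generality: when the skew shape $S$ has many cells in both the lower rows and the right-hand columns, a single corotation does not suffice, and one must stitch together catabolism transformations (which swap adjacent cocharge labels differing by more than one) to reshuffle entries from the ``horizontal slice'' ordering of $w_1$ into the ``vertical slice'' ordering of $w_2$. I would attack this by an inner induction on the number of rows of $S$ below row $1$, peeling off the bottom row's entries at columns $\leq m$ via a sequence of (co)rotations and catabolism swaps that matches the corresponding steps on both sides, thereby reducing to a skew shape $S'$ with one fewer row where the inductive hypothesis applies. The column-insertion reformulation in Algorithm~\ref{cat_algorithm2} should provide the right bookkeeping to track this reduction.
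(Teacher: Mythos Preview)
Your reduction to the ``key claim'' $\ctype(\cat_m(T))=\ctype(\ccat_m(T))$ is valid, but the proof of that claim is not completed: you acknowledge that building an explicit chain of Knuth moves, non-zero corotations, and catabolism transformations from $w_1=\reading(S_n)\reading(S_s)$ to $w_2=\reading(S_e)\reading(S_w)$ is the ``main obstacle,'' and the sketch you give (peeling off bottom rows by an inner induction) is not carried out. So as written there is a genuine gap.

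The paper's ``similar proof to that of (vi)'' avoids this obstacle altogether. Rather than proving (vi) and then trying to compare $\cat_m$ with $\ccat_m$, one reruns the argument of (vi) with the \emph{column} reading word in place of the row reading word, to show directly that $F(u)$ equals the column catabolizability of $P(u)$. Concretely: by (v), run the algorithm on $\creading(P(u))=c_1\cdots c_k$. Every entry in columns $>m$ has positive cocharge label (since $m+1=T_{2,1}$ lies to the left of $m$ in $\creading(T)$, so all $j>m$ have label $\geq 1$), hence the first steps corotate all of $c_{m+1}\cdots c_k=\creading(S_e)$ to the front, leaving the standard word $\creading(S_e)\,\creading(T_W)$ where $T_W$ is the first $m$ columns of $T$. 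Now use (v) again on the suffix: $\creading(T_W)$ is Knuth equivalent to $\reading(T_W)=\reading(S_w)\,1\,2\cdots m$. After this replacement the word ends in $1\,2\cdots m$; inserting the $m$ zeros yields $\nu=(m)$ and leaves the word $\creading(S_e)\,\reading(S_w)$, which (since $\creading(S_e)\sim\reading(S_e)$) inserts to $\ccat_m(P(u))$. The inductive step is then identical to that of (vi), with $\ccat_m$ in place of $\cat_m$, giving $F(u)=$ column catabolizability of $P(u)$. Combined with (vi), this is the corollary. The missing idea in your attempt is exactly this: instead of trying to connect $w_1$ and $w_2$ after the fact, feed the algorithm $\creading(T)$ so that its natural corotations produce $\ccat_m$ rather than $\cat_m$.
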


\subsection{}
We present one more variant of the catabolism insertion algorithm that determines whether a tableau is $\lambda$-catabolizable rather than its catabolizability.

\begin{algorithm}
\label{cat_algorithm3}
This takes a standard word $w$ of length $n$ and a partition $\lambda$ of $n$ as input. It is the same as Algorithm \ref{cat_algorithm} except that the partition $\nu$ of the word-partition pair is forced to satisfy $\nu_i \leq \lambda_i$ for all $i$. Precisely, $f$ is replaced by

\be \label{e flambda}
f_\lambda(ya, \nu) =
\begin{cases}
(y, \nu + \epsilon_{a+1}) & \text{if } \nu + \epsilon_{a+1} \text{ is a partition and $\nu_{a+1}+1 \leq \lambda_{a+1}$} \\
(a+1 \ y, \nu) & \text{otherwise.}
\end{cases} \ee

This algorithm repeatedly applies $f_\lambda$ to $(\cl{w}, \emptyset)$ until either a number $a$ is presented such that $\lambda_{a+1}=0$, or the word of the pair is empty. The algorithm outputs false if the former occurs and true if the latter occurs.
\end{algorithm}

\begin{theorem}\label{cat_algorithm_theorem2}
Algorithm \ref{cat_algorithm3}, with input a standard word $u$ of length $n$ and a partition $\lambda$ of $n$, outputs true if and only if $P(u)$ is $\lambda$-catabolizable.
\end{theorem}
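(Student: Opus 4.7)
The plan is to prove this by induction on $n=|u|$, mirroring the proof of Theorem~\ref{cat_algorithm_theorem}(vi). The first step is to verify that Algorithm~\ref{cat_algorithm3} inherits the invariance properties of Algorithm~\ref{cat_algorithm} under non-zero (co)rotations, catabolism transformations, and Knuth transformations of $u$: each of these transformations leaves the boolean output unchanged. The arguments transcribe those of Theorem~\ref{cat_algorithm_theorem}(iii)--(v), and the only new subtlety is checking that the ``false'' outcomes align. This should be immediate because $\lambda_{a+1}=0$ forces $\lambda_{a+j}=0$ for every $j\ge 1$, so once any transformation partner produces a number destined to trigger the $\lambda$-cap failure, both computations eventually do.

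Using Knuth invariance (and the fact that $\lambda$-catabolizability depends only on $P(u)$) I would then reduce to $u=\reading(P(u))$. Writing row $1$ of $\cl{P(u)}$ as $0^m b_{m+1}\cdots b_k$, where $m$ is the largest integer with $P(u)_{(m)}=Z^*_{(m)}$ and each $b_i\ge 1$, the last $k$ entries of $\cl{u}$ drive the first phase of the algorithm: the $b_i$'s corotate one by one (returning false if some $\lambda_{b_i+1}=0$), after which the $m$ zeros are offered to row $1$ of $\nu$, capped at $\lambda_1$.

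I would then split into cases. If $m<\lambda_1$, then $P(u)_{(\lambda_1)}\ne Z^*_{(\lambda_1)}$ and $P(u)$ is not $\lambda$-catabolizable; on the algorithm side, after phase $1$ we have $\nu=(m)$ and no $0$'s left in the word, and corotations only raise entries, so $\nu_1$ can never grow past $m$. Since termination with ``true'' requires $|\nu|=|\lambda|=n$ and thus $\nu_1=\lambda_1>m$, the algorithm must instead eventually trigger a $\lambda_{a+1}=0$ violation and output false. If $m\ge \lambda_1$, then $P(u)_{(\lambda_1)}=Z^*_{(\lambda_1)}$, the first condition of $\lambda$-catabolizability holds, and the task is to identify the post-phase-$1$ computation with Algorithm~\ref{cat_algorithm3} applied to input $(v,\widehat{\lambda})$, where $v$ is the standard word with $\cl{v}=x'$ and $x'$ is obtained from the remaining word by subtracting $1$ from each entry. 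Paralleling the analysis in (vi) gives $P(v)=\cat_{\lambda_1}(P(u))$ with all entries shifted by $-\lambda_1$, so the inductive hypothesis then closes the induction: Algorithm~\ref{cat_algorithm3} outputs true on $(v,\widehat{\lambda})$ iff $\cat_{\lambda_1}(P(u))$ is $\widehat{\lambda}$-catabolizable, which combined with $P(u)_{(\lambda_1)}=Z^*_{(\lambda_1)}$ is exactly $\lambda$-catabolizability of $P(u)$.

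The main obstacle will be the subcase $m>\lambda_1$: once $\nu_1=\lambda_1$ the remaining $m-\lambda_1$ zeros cannot be inserted and instead corotate into $1$'s prepended to the word, so the post-phase-$1$ state is not literally the starting state of Algorithm~\ref{cat_algorithm3} on a shorter input. To match the two computations I expect to use the invariances established in the first step (catabolism transformations and non-zero corotations on the intermediate word $x\,\reading(Z_\nu)$) together with the variant Algorithm~\ref{cat_algorithm2} to transport the extra $1$'s past the other symbols and recover the desired $\cl{v}$. A secondary technical point is cleanly verifying $P(v) = \cat_{\lambda_1}(P(u))$ with entries shifted by $-\lambda_1$, which should follow by the same argument as in the proof of (vi).
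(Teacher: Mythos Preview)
Your approach is essentially the same as the paper's: both mirror the proof of Theorem~\ref{cat_algorithm_theorem}. The paper's own proof is extremely terse, saying only that the argument is ``essentially the same'' and then isolating the one new wrinkle, which is in property~(i) (that $x\,\reading(Z_\nu)$ remains a valid cocharge labeling after every step). The only place Algorithms~\ref{cat_algorithm} and~\ref{cat_algorithm3} differ is the corotation forced by the $\lambda$-cap, namely when $\nu+\epsilon_{a+1}$ is a partition but $\nu_{a+1}+1>\lambda_{a+1}>0$; the paper observes that this step decomposes (in the style of Algorithm~\ref{cat_algorithm2}) into Knuth transformations, \emph{ascents}, and catabolism transformations followed by a corotation. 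The appearance of ascents is the genuinely new ingredient, and you do not mention it. You should, because (i) underlies termination, and your argument that the $m<\lambda_1$ case must eventually output false depends on termination.

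Conversely, your treatment of the inductive step --- the analogue of (vi), with the case split on $m$ versus $\lambda_1$ and the handling of the $m>\lambda_1$ obstacle via the invariances on the intermediate word $x\,\reading(Z_\nu)$ --- is considerably more explicit than the paper, which leaves all of this to the reader. So the two write-ups are complementary expansions of the same skeleton: the paper singles out~(i), you flesh out~(vi).
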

\begin{proof}
The proof is essentially the same as the proof of Theorem \ref{cat_algorithm_theorem}. The most significant difference is for the proof of (i).
The only case in which Algorithms \ref{cat_algorithm} and \ref{cat_algorithm3} differ is when $a$ is presented and $\nu + \epsilon_{a+1}$ is a partition but $\nu_{a+1}+1 > \lambda_{a+1}$. Such a corotation step of Algorithm \ref{cat_algorithm3} can also be broken into several steps as in Algorithm \ref{cat_algorithm2}.  If $\lambda_{a+1} >0$, then this step can be broken into a sequence of Knuth transformations, ascents, and catabolism transformations followed by a corotation.  These operations could be applied by converting back to a standard word, performing the operation, and then taking the cocharge labeling.  If on the other hand, $\lambda_{a+1 }=0 $, then the algorithm terminates and outputs false.
\end{proof}

A corollary to this theorem is Proposition \ref{p row cat equals column cat} (Proposition 49 in \cite{SW}).  
\begin{remark}
We could have also concluded Proposition \ref{p row cat equals column cat} from Corollary \ref{c row catability eq column catability} and Proposition \ref{p basic cat} and its version for column catabolizability, however Proposition \ref{p basic cat} for column catabolizability  is significantly more difficult than the proposition itself. Instead, Proposition \ref{p row cat equals column cat} and Proposition \ref{p basic cat} allow us to conclude Proposition \ref{p basic cat} for column catabolizability, although a roundabout way.
\end{remark}
\subsection{}
\label{ss Greene}
Corollary \ref{c cke} allows us to characterize catabolizability in a similar way to the Greene's Theorem interpretation of the shape of the insertion tableau of a word (see \cite[Lemma A1.1.7]{F}). This is reminiscent of the combinatorial description of two-sided cells in the affine Weyl group of type $A$ (see, for instance, \cite{X}), and also of the usual way of computing cocharge of semistandard words.

Let $w$ be a standard word and $z$ its cocharge labeling. Define $\t{w}: \ZZ_{\leq n} \to \ZZ_{\geq 0}$ by $i \mapsto z_{i + kn} + k$, where $k$ is the unique integer so that $i + kn \in [n]$. Also let $\t{z}$ refer to this same map. Let $\rsd{}: \ZZ_{\leq n} \to \ZZ/n\ZZ$ be the map sending an integer to its congruence class mod $n$.

A \emph{chain} of $\t{w}$ of \emph{length} $k'+1$ is a sequence $\mathbf{j} = (j_{k'}, j_{k'-1}, \dots, j_0)$  satisfying
\begin{list}{\emph{(\roman{ctr})}} {\usecounter{ctr} \setlength{\itemsep}{1pt} \setlength{\topsep}{2pt}}
\item $j_{k'} < j_{k'-1} < \dots < j_0$,
\item $\t{w}(j_i) = i$, for all $i \in [0, k']$,
\item $\rsd{j_i} \neq \rsd{j_{i'}}$, for all $i, i' \in [0, k']$ with $i \neq i'$.
\end{list}
The underlying set of $\mathbf{j}$ is $\mathbf{j}^* := \{ j_{k'}, j_{k'-1}, \dots, j_0 \}$.

A \emph{$k$-bounded chain family} of $\t{w}$ is a set $\mathfrak{A} = \{A_1, \dots, A_l\}$ of chains of $\t{w}$ of lengths at most $k$ such that the subsets $\rsd{A_i}$ of $\ZZ/n\ZZ$ are disjoint. The \emph{support} of $\mathfrak{A}$ is $\cup_i A_i^* \subseteq \ZZ_{\leq n}$ and the \emph{size} of a $k$-bounded chain family is the cardinality of its support. The maximum size of a $k$-bounded chain family of $\t{w}$ is denoted $I_k(\t{w})$.

Let $s_d: \ZZ_{\leq n} \to \ZZ_{\leq n}$ for $d \in [n-1]$ be the affine versions of the simple reflections of $\S_n$: $s_d$ transposes $d+kn$ and $d+1+kn$ for all $k \leq 0$. Certainly $\t{u}s_d = \widetilde{u s_d}$ if $u \tto u s_d$ is cocharge-preserving. The action of $\S_n$ on chains is given by $s_d(\mathbf{j}) = ( s_d(j_{k'}), s_d(j_{k'-1}), \dots, s_d(j_0))$.

Now assume that standard words $u$ and $v$ differ from each other by a simple transposition, $v = u s_d$, so that the transformation $u \tto v$ is cocharge-preserving.

\begin{lemma}
\label{l chain}
With $u, v$ as above, suppose $\cl{u}_d \neq \cl{u}_{d+1} + 1$. Then, if $\mathbf{j}$ is a chain of $\t{u}$, then $s_d(\textbf{j})$ is a chain of $\t{v}$.
\end{lemma}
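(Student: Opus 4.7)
The plan is to verify the three defining properties (i), (ii), (iii) of a chain for $s_d(\mathbf{j})$ with respect to $\t{v}$.  Condition (ii) will be essentially free from the cocharge-preservation identity $\t{v} = \t{u}\, s_d$ noted just before the lemma: since $s_d$ is an involution, $\t{v}(s_d(j_i)) = \t{u}(j_i) = i$.  Condition (iii) is nearly as quick: the affine reflection $s_d$ descends to the transposition $d \leftrightarrow d{+}1$ on $\ZZ/n\ZZ$ and is the identity on the remaining residue classes, so it sends any set of pairwise distinct residues to another set of pairwise distinct residues.

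The real work is in condition (i), and the crux will be the following key claim: \emph{under the hypothesis $\cl{u}_d \neq \cl{u}_{d+1} + 1$, the chain $\mathbf{j}$ contains at most one element of each swap pair $P_k := \{d+kn,\, d+1+kn\}$.}  I would prove this by contradiction.  Suppose both elements of some $P_k$ lie in $\mathbf{j}^*$.  Unpacking the definition of $\t{u}$ shows $\t{u}(d+kn) = \cl{u}_d - k$ and $\t{u}(d+1+kn) = \cl{u}_{d+1} - k$, so the chain indices of these two elements (equal to their $\t{u}$-values) differ by $\cl{u}_d - \cl{u}_{d+1}$.  But $d+kn$ and $d+1+kn$ are consecutive integers, so no third chain element can lie strictly between them, forcing them to occupy consecutive chain indices.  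This gives $\cl{u}_d - \cl{u}_{d+1} = 1$, contradicting the hypothesis.

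Granted the key claim, condition (i) reduces to verifying $s_d(j_{i+1}) < s_d(j_i)$ for each adjacent pair $j_{i+1} < j_i$ of $\mathbf{j}$ by a short case analysis: each of $j_{i+1}, j_i$ is either fixed by $s_d$ or shifted by $\pm 1$, and the only way the inequality can reverse is if $s_d$ interchanges $j_{i+1}$ and $j_i$ within a common swap pair --- exactly the configuration excluded by the key claim.  The remaining cases, in which $j_{i+1}$ and $j_i$ lie in distinct swap pairs $P_k, P_{k'}$, are handled by a short direct computation with $k$ and $k'$ using the fact that the chain has pairwise distinct residues (so one element must have residue $d$ and the other $d+1$).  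I expect the key claim to be the conceptual obstacle; once it is in place, the rest amounts to bookkeeping.
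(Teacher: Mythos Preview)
Your proposal is correct and follows essentially the same approach as the paper's proof: both reduce the problem to ruling out the single failure mode in which two consecutive chain elements form a swap pair $\{d+kn,\,d+1+kn\}$, and both derive the contradiction $\cl{u}_d = \cl{u}_{d+1}+1$ from the chain conditions. The paper compresses all of this into two sentences --- it simply asserts that the only obstruction is $j_i = j_{i-1}-1 \equiv d \pmod n$ and that this is excluded by hypothesis --- whereas you have unpacked the same argument into the key claim and the subsequent case analysis for condition~(i), and made condition~(iii) explicit.
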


\begin{proof}
First observe
\be \t{v}(s_d(\mathbf{j})) = \t{u}(s_d(s_d(\mathbf{j}))) = \t{u}(\mathbf{j}). \ee
The only way $s_d(\mathbf{j})$ is not a chain of $\t{v}$ is if $j_i = j_{i-1} - 1 \equiv d \mod n$ for some $i$, which is excluded by the assumption $\cl{u}_d \neq \cl{u}_{d+1} + 1$.
\end{proof}

\begin{theorem}
\label{t green}
With the notation above,
\be \sum\limits_{i=1}^{k} \ctype(w)_i = I_k(\t{w}). \ee
\end{theorem}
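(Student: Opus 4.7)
The plan is to invoke Corollary~\ref{c cke}: I show that $I_k(\t{w})$, as a function of the standard word $w$, satisfies the characterization of $\sum_{i=1}^{k}\ctype(w)_i$, by checking it equals $\lambda_1 + \cdots + \lambda_k$ on $w = \reading(Z^*_\lambda)$ and is invariant under non-zero (co)rotations, catabolism transformations, and Knuth transformations. For the normalization, $\cl{\reading(Z^*_\lambda)} = \reading(Z_\lambda)$ has $\lambda_i$ positions of label $i-1$ in the main period. Each residue class attains its minimum value of $\t{w}$ at its unique representative in $[n]$, and since a $k$-bounded chain family uses at most one position per residue and only residues carrying some label-$(<k)$ position, I would get $I_k(\t{w}) \leq \#\{m \in [n] : \cl{w}_m < k\} = \lambda_1 + \cdots + \lambda_k$. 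The matching lower bound, constructed entirely inside $[n]$, groups the label-$(<k)$ positions into $\lambda_i - \lambda_{i+1}$ chains of length $i$ for $i = 1, \ldots, k$ (with $\lambda_{k+1} := 0$), using the left-to-right decreasing label layout of $\reading(Z_\lambda)$.

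Invariance under catabolism transformations $u \tto v = u s_d$ (where $|\cl{u}_d - \cl{u}_{d+1}| > 1$) follows directly from Lemma~\ref{l chain}: the hypothesis $\cl{u}_d \neq \cl{u}_{d+1} + 1$ holds in both directions, so $s_d$ gives a length-preserving bijection between chains of $\t{u}$ and $\t{v}$, which also permutes residues mod $n$ and therefore induces a bijection of $k$-bounded chain families. For a non-zero corotation, with $\cl{u} = y a$, $\cl{v} = (a+1) y$, $a > 0$, I verify $\t{v}(j) = \t{u}(j - 1)$ on $\ZZ_{\leq n}$; because $\t{u}(n) = a > 0$ rules out $j_0 = n$ in any chain of $\t{u}$, the shift $j \mapsto j + 1$ is well-defined on chains of $\t{u}$ and translates residues by $+1$ mod $n$, so it gives the required bijection of chain families.

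The main obstacle will be invariance under Knuth transformations. A Knuth swap $u \tto v = u s_d$ is cocharge-preserving because the swapped letters differ by at least $2$ (the Knuth witness $b$ lies strictly between them in value), but Lemma~\ref{l chain}'s hypothesis can fail on one side --- for example, in the $cab \to acb$ or $bca \to bac$ direction when the cocharge labels of the two swapped letters differ by exactly $1$. In that failing direction, chains of $\t{u}$ can traverse positions $d+1, d$ as a consecutive segment, and the $s_d$-image reverses the chain order. My plan is to exploit the Knuth witness $b$, at position $d - 1$ or $d + 2$: since $b$'s value lies strictly between those of the swapped letters and their labels differ by $1$, the cocharge label of $b$ equals the label of one of them, and $b$'s residue is distinct from those of $d, d+1$. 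This lets me locally reroute any maximum $k$-bounded chain family of $\t{u}$ containing a problematic chain into one without, by swapping the witness position with the offending endpoint of the problematic chain across two chains of the family (when the witness residue is already occupied) or by a comparable local adjustment otherwise; Lemma~\ref{l chain} then delivers a chain family of $\t{v}$ of the same size, giving $I_k(\t{u}) \leq I_k(\t{v})$, with the reverse inequality following symmetrically. The delicate step is verifying that the rerouting always succeeds; this reduces to a case analysis on how the problematic chain and the witness sit relative to the other chains of the family in the residue classes of $\{d-1, d, d+1, d+2\}$.
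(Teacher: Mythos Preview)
Your overall strategy---invoking Corollary~\ref{c cke} and checking normalization plus invariance under the three kinds of moves---is exactly the paper's.  Your treatment of the normalization, of non-zero (co)rotations, and of catabolism transformations is also essentially the same as the paper's (the paper is terser, but the content is identical).

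The divergence is in the Knuth case, and there your proposal is a plan rather than a proof: you promise a ``local rerouting'' through the witness position and then defer to ``a case analysis on how the problematic chain and the witness sit relative to the other chains.''  You never carry that analysis out, and the claim that ``the cocharge label of $b$ equals the label of one of them'' is asserted without justification (it depends on the global cocharge labeling, not just on the local Knuth pattern in the standard word).  The paper avoids all of this.  First, since catabolism invariance is already in hand, any Knuth swap whose two cocharge labels differ by more than~$1$ is already covered; if the labels are equal the swap does nothing on cocharge labelings; so one may reduce to a single concrete pattern, which the paper takes to be $\cl{u}_d=a+1$, $\cl{u}_{d+1}=a$, $\cl{u}_{d+2}=a$ (the other pattern being symmetric).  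In that pattern Lemma~\ref{l chain} applies cleanly in the $v\to u$ direction.  For the problematic $u\to v$ direction the paper does not reroute a single chain through the witness; it applies the \emph{uniform} permutation $s_d s_{d+1}$ to \emph{every} chain of the family $\mathfrak{A}$ and checks that $\{s_d s_{d+1}(\mathbf{j}):\mathbf{j}\in\mathfrak{A}\}$ is a $k$-bounded chain family of $\t{v}$ of the same size.  This one-line fix replaces your open-ended case analysis entirely.  If you want to complete your argument, I would abandon the rerouting heuristic and adopt this reduction-plus-$s_d s_{d+1}$ trick.
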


\begin{proof}
By Corollary \ref{c cke}, it suffices to check that $I_k(\t{w})$ is $\sum\limits_{i=1}^{k} \lambda_i$ for $\cl{w} = \reading(Z_\lambda)$, is invariant under non-zero (co)rotations, catabolism transformations, and Knuth transformations.

For $\cl{w} = \reading(Z_\lambda)$, $I_k(\t{w}) = \sum\limits_{i=1}^{k} \lambda_i$. There holds
\[|\{\rsd{i} : \t{w}(i) \leq k-1\}| = \sum\limits_{i=1}^{k} \lambda_i \] so that $I_k(\t{w})$ cannot possibly exceed this number. It is easy to exhibit a $k$-bounded chain family of $\t{w}$ of size $\sum\limits_{i=1}^{k} \lambda_i$.

Non-zero (co)rotations. If $v$ is a non-zero corotation of $u$, then $\t{v}(i) = \t{u}(i-1)$ and $\t{u}(n) \neq 0$. Thus the support of a $k$-bounded chain family of $\t{u}$ cannot contain $n$ and the notions of a $k$-bounded chain family for $\t{u}$ and $\t{v}$ differ only by shifting indices by 1.

Catabolism transformations. This follows from the special case of Lemma \ref{l chain} in which $|\cl{u}_d - \cl{u}_{d+1}| \neq 1$.

Knuth transformations. We may assume that $u \tto v$ is a Knuth transformation with $v = u s_d$ and $\cl{u}_d = a + 1$, $\cl{u}_{d+1} = a$, $\cl{u}_{d+2} = a$.
\be u = \cdots a+1\ a \ a \cdots \tto v = \cdots a\ a+1\ a\ \cdots \ee
By Lemma \ref{l chain} and its proof, any $k$-bounded chain family of $\t{v}$ yields one of the same size for $\t{u}$, and any $k$-bounded chain family $\mathfrak{A}$ of $\t{u}$ yields one of the same size for $\t{v}$ provided any $\mathbf{j} \in \mathfrak{A}$ does not satisfy $j_i = j_{i-1} - 1 \equiv d \mod n$ for some $i$. If this is the case, then one checks that $\{s_d s_{d+1}(\mathbf{j}) : \mathbf{j} \in \mathfrak{A}\}$ is a $k$-bounded chain family of $\t{v}$, clearly of the same size as $\mathfrak{A}$.
\end{proof}

This proof differs in an important way with that of the Greene's Theorem interpretation of insertion tableau \cite[Lemma A1.1.7]{F}. None of the alterations of chains in the proof change the lengths of chains. This allows us to conclude the stronger statement:

\begin{theorem}
If $\ctype(w) = \lambda$ then $\t{w}$ has an $\ell(\lambda)$-bounded chain family consisting of chains of lengths $\lambda'_1, \lambda'_2, \dots, \lambda'_{\lambda_1}$.
\end{theorem}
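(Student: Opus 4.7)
The plan is to refine the proof of Theorem \ref{t green} by tracking the lengths of individual chains rather than only the total size of a chain family. The key observation, already flagged in the preceding remark, is that each of the three alterations exhibited there acts on chain families by a map that takes chains to chains of the \emph{same length}; hence not just $I_k(\t w)$ but the entire multiset of chain lengths of a realizing family is preserved. So it suffices to verify the conclusion at one representative of each equivalence class under Knuth, catabolism, and non-zero (co)rotation transformations.

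\textbf{Base case.} For $\cl w = \reading(Z_\lambda)$, I would exhibit an explicit $\ell(\lambda)$-bounded family with chain lengths $\lambda'_1,\ldots,\lambda'_{\lambda_1}$. In this cocharge labeling, the value $i-1$ appears exactly $\lambda_i$ times (from row $i$ of $Z_\lambda$), and because $\reading$ reads rows bottom-to-top, the block of value $i-1$ sits entirely to the left of the block of value $(i-1)-1$. For each $j \in [\lambda_1]$, let $A_j$ consist of the $j$-th leftmost position of value $i-1$, over all $i$ with $\lambda_i \geq j$. Then $|A_j| = \lambda'_j$, the ordering condition is automatic (positions decrease as $i$ increases), and a short computation using $\lambda_{i'+1} \geq \lambda_i$ shows that the positions $n-(\lambda_1+\cdots+\lambda_i)+j$ are pairwise distinct in $[n]$, so the $\rsd{A_j^*}$ are pairwise disjoint.

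\textbf{Invariance.} For a non-zero corotation, the identity $\t v(i) = \t u(i-1)$ shifts any chain family, preserving the length of each chain (and $\t u(n)\neq 0$ ensures position $n$ is never used). For a catabolism transformation, Lemma \ref{l chain} directly gives the length-preserving bijection $\mathbf{j}\mapsto s_d(\mathbf{j})$. For a Knuth transformation $u\tto v = u s_d$, the proof of Theorem \ref{t green} already provides a chain-by-chain map: chains of $\t u$ with $j_i = j_{i-1}-1 \equiv d \bmod n$ are moved by $s_d s_{d+1}$, and all others by $s_d$; in both cases the underlying set changes while the length does not.

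\textbf{Conclusion.} The class of standard words $w$ for which $\t w$ admits the desired chain family is closed under all three operations and contains $\reading(Z^*_\lambda)$, so by Corollary \ref{c cke} it contains every $w$ with $\ctype(w)=\lambda$. The only real calculation is in the base case, namely verifying residue-disjointness; everything else is a mechanical refinement of Theorem \ref{t green}, and the main obstacle is simply being careful to record that each chain-moving operation used there really is length-preserving on individual chains, not merely size-preserving on the whole family.
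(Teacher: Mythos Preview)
Your proposal is correct and is exactly the paper's argument spelled out in more detail: the paper simply observes that ``none of the alterations of chains in the proof [of Theorem~\ref{t green}] change the lengths of chains,'' which, combined with the explicit base-case family for $\reading(Z_\lambda)$ and the connectivity furnished by Corollary~\ref{c cke}, yields the result. One minor slip: your inequality ``$\lambda_{i'+1}\geq \lambda_i$'' is garbled, but the disjointness of residues in the base case is immediate since the chosen positions $n-(\lambda_1+\cdots+\lambda_i)+j$ (over all valid $i,j$) simply enumerate $[n]$.
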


Applying Lemma \ref{l chain} and Theorem \ref{t green}, we obtain

\begin{corollary}
\label{c ascent edge}
If $u \tto v$ is an ascent, then $\ctype(u) \gd \ctype(v)$.
\end{corollary}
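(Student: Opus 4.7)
My plan is to translate the dominance assertion $\ctype(u) \gd \ctype(v)$ into a family of inequalities on the chain statistics $I_k$ using Theorem \ref{t green}, and then verify those inequalities via Lemma \ref{l chain} applied with the roles of $u$ and $v$ swapped. Since $u$ and $v$ have the same length $n$, their catabolizabilities are both partitions of $n$, so $\ctype(u) \gd \ctype(v)$ is equivalent to the inequalities $\sum_{i=1}^{k} \ctype(u)_i \geq \sum_{i=1}^{k} \ctype(v)_i$ for every $k \geq 1$; by Theorem \ref{t green} this in turn reduces to showing $I_k(\t{u}) \geq I_k(\t{v})$ for every $k$.

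The key observation is that the hypothesis of Lemma \ref{l chain} can fail at $u$ (for instance when $\cl{u}_d = \cl{u}_{d+1} + 1$), but it is automatically satisfied at $v$: writing $v = u s_d$, the ascent condition gives $\cl{u}_d > \cl{u}_{d+1}$, so $\cl{v}_d = \cl{u}_{d+1} < \cl{u}_d = \cl{v}_{d+1}$, which certainly rules out $\cl{v}_d = \cl{v}_{d+1} + 1$. Since an ascent is cocharge-preserving by definition, the transformation $v \tto u = v s_d$ satisfies all the hypotheses of Lemma \ref{l chain}, and thus $s_d$ sends each chain $\mathbf{j}$ of $\t{v}$ to a chain $s_d(\mathbf{j})$ of $\t{u}$.

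To promote chains to chain families, I would note that $s_d$ acts on residues mod $n$ merely by swapping $\rsd{d}$ and $\rsd{d+1}$, so the pairwise disjointness of the residue sets in a chain family is preserved; and since $s_d$ is a bijection on $\ZZ_{\leq n}$, the size of the support is unchanged. Hence every $k$-bounded chain family of $\t{v}$ yields a $k$-bounded chain family of $\t{u}$ of the same size, giving $I_k(\t{u}) \geq I_k(\t{v})$ for all $k$. The only real obstacle in this argument is noticing that one must apply Lemma \ref{l chain} to $v$ rather than $u$; the ascent hypothesis is precisely what makes this role-swap work automatically, after which the rest is a routine bookkeeping check.
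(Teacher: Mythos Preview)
Your argument is correct and is exactly the approach the paper intends: it is a fleshed-out version of the one-line ``Applying Lemma~\ref{l chain} and Theorem~\ref{t green}''. In particular, your observation that Lemma~\ref{l chain} must be applied with the roles of $u$ and $v$ swapped---which the ascent hypothesis $\cl{u}_d > \cl{u}_{d+1}$ makes legitimate since then $\cl{v}_d = \cl{u}_{d+1} < \cl{u}_d = \cl{v}_{d+1}$---is precisely the point, and the bookkeeping that $s_d$ preserves chain lengths, residue-disjointness, and support size is routine as you say.
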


The next corollary is a strengthening of \cite[Lemma 51]{SW}.

\begin{corollary}
\label{c corotation}
If $v$ is a non-zero corotation of $u$, then $\ctype(v) = \ctype(u)$. If $v$ is a zero corotation of $u$, then $\ctype(v)$ is strictly less than $\ctype(u)$ in dominance order.
\end{corollary}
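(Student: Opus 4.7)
The plan is to treat the two halves of the corollary separately. For non-zero corotations, Theorem~\ref{cat_algorithm_theorem}(iii) gives $F(u) = F(v)$ and part~(vi) gives $F = \ctype$, so $\ctype(u) = \ctype(v)$ is immediate.

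For zero corotations, I would use Theorem~\ref{t green} as the main engine. The first step is to verify that for \emph{any} corotation (zero or not), $\t{v}(i) = \t{u}(i-1)$ for every $i \leq n$; this unpacks directly from $\cl{v} = (a+1)\,y$ versus $\cl{u} = y\,a$ together with the defining formula $\t{w}(i) = z_{i+kn} + k$. Consequently, the index shift $(j_{k'}, \dots, j_0) \mapsto (j_{k'} - 1, \dots, j_0 - 1)$ sets up a bijection between $k$-bounded chain families of $\t{v}$ and those $k$-bounded chain families of $\t{u}$ whose supports avoid the index $n$ (the supports of the former lie in $\ZZ_{\leq n}$, those of the latter in $\ZZ_{\leq n-1}$). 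Hence $I_k(\t{v}) \leq I_k(\t{u})$ for every $k$, and Theorem~\ref{t green} translates this into $\ctype(v) \ld \ctype(u)$ in dominance order.

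The main obstacle is promoting this to \emph{strict} dominance in the zero-corotation case. The plan is to specialize to $k = 1$, where $I_1$ admits a transparent description: a $1$-bounded chain family of $\t{w}$ is just a set of singletons $\{j\}$ with $\t{w}(j) = 0$ having pairwise distinct residues mod~$n$, and since $\t{w}(j) = 0$ forces $j \in [n]$ while distinct elements of $[n]$ automatically have distinct residues mod~$n$, we obtain $I_1(\t{w}) = \#\{j \in [n] : \cl{w}_j = 0\}$, i.e.\ the number of $0$'s appearing in $\cl{w}$. For a zero corotation, $\cl{u} = y\,0$ and $\cl{v} = 1\,y$, so $\cl{v}$ has exactly one fewer $0$ than $\cl{u}$. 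Theorem~\ref{t green} then yields
\[
\ctype(v)_1 = I_1(\t{v}) = I_1(\t{u}) - 1 = \ctype(u)_1 - 1,
\]
which rules out $\ctype(v) = \ctype(u)$ and, combined with the dominance inequality above, produces the strict $\ctype(v) \ldneq \ctype(u)$.
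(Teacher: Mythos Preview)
Your argument is correct and is essentially the approach implicit in the paper. The non-zero case is exactly Theorem~\ref{cat_algorithm_theorem}(iii),(vi) (equivalently Corollary~\ref{c cke}), and for the zero case you reuse the index-shift identity $\t{v}(i)=\t{u}(i-1)$ already established in the proof of Theorem~\ref{t green}, together with the observation that the only obstruction to the bijection is the index $n$; specializing to $k=1$ to extract $\ctype(\cdot)_1$ as the number of $0$'s in the cocharge labeling is the clean way to force strictness.
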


\begin{remark}
It is tempting to conjecture that if  $u \tto v$ is an ascent or a zero corotation and $\ctype(v) \neq \ctype(u)$, then $\ctype(v) \ld \ctype(u)$ is a covering relation in the dominance order poset. This is false. For example, if $u = 0\ 0\ 1\ 1\ 0$,  $v= 1\ 0\ 0\ 1\ 1 $, then $\ctype(u)=(3, 1, 1)$, which does not cover   $\ctype(v)=(2, 1, 1, 1)$; if $u= 1\ 1\ 2\ 2\ 0\ 0\ 1\ 0$, $v=1\ 1\ 2\ 2\ 0\ 0\ 0\ 1 $, then $\ctype(u)=(3, 3, 1, 1)$, which does not cover $\ctype(v)=(3, 2, 1, 1, 1)$.
\end{remark}

\section*{Acknowledgments}
This paper would not have been possible without the generous advice from and many detailed discussions with Mark Haiman.  I am also grateful to Michael Phillips and Ryo Masuda for help typing and typesetting figures.

 \end{document}